\newcommand{\R}{\mathbb{R}}
\begin{document}

\title{Learning Hamiltonian Systems with Mono-Implicit Runge–Kutta Methods\thanks{Supported by the Research Council of Norway, through the project DynNoise: Learning dynamical systems from noisy data. (No.
339389).}}

%
%
\author{Håkon Noren}
\authorrunning{H. Noren}
%
\institute{Department of Mathematical Sciences, Norwegian University of Science and Technology, Trondheim, Norway \\
\email{hakon.noren@ntnu.no}
}
\maketitle              

    \begin{abstract}
    Numerical integrators could be used to form interpolation conditions when training neural networks to approximate the vector field of an ordinary differential equation (ODE) from data. When numerical one-step schemes such as the Runge--Kutta methods are used to approximate the temporal discretization of an ODE with a known vector field, properties such as symmetry and stability are much studied. Here, we show that using mono-implicit Runge--Kutta methods of high order allows for accurate training of Hamiltonian neural networks on small datasets. This is demonstrated by numerical experiments where the Hamiltonian of the chaotic double pendulum in addition to the Fermi--Pasta--Ulam--Tsingou system is learned from data.

\keywords{Inverse problems \and Hamiltonian systems \and Mono-implicit Runge--Kutta \and Deep neural networks.}
\end{abstract}

\section{Introduction}

In this paper, we apply backward error analysis \cite{hairer2006geometric} to motivate the use of numerical integrators of high order when approximating the vector field of ODEs with neural networks. We particularly consider mono-implicit Runge--Kutta (MIRK) methods \cite{cash1975class,burrage1994order}, a class of one-step methods that are explicit when solving inverse problems. Such methods can be constructed to have high order with relatively few stages, compared to explicit Runge--Kutta methods, and attractive properties such as symmetry. Here, we perform numerical experiments learning two Hamiltonian systems with MIRK methods up to order $p=6$. To the best of our knowledge, this is the first demonstration of the remarkable capacity of numerical integrators of order $p>4$ to facilitate the training of Hamiltonian neural networks \cite{HamiltonianNeuralNetworks} from sparse datasets, to do accurate interpolation and extrapolation in time.


Recently, there has been a growing interest in studying neural networks through the lens of dynamical systems. This is of interest both to accelerate data-driven modeling and for designing effective architectures for neural networks \cite{haber2017stable,ruthotto2020deep,machine_dynamical}. Considering neural network layers as the flow of a dynamical system is the idea driving the study of so-called neural ODEs \cite{chen2018neural} and its discretized counter-part, residual neural networks. 

Hamiltonian mechanics provide an elegant formalism that allows a wide range of energy preserving dynamical systems to be described as first order ODEs. Hamiltonian neural networks \cite{HamiltonianNeuralNetworks} aim at learning energy-preserving dynamical systems from data by approximating the Hamiltonian using neural networks.  A central issue when studying neural networks and dynamical systems is which method to use when discretizing the continuous time dynamics. Several works use backward error analysis to argue for the importance of using symplectic integrators for learning the vector field of Hamiltonian systems \cite{Chen2020Symplectic,DeepHamiltoniannetworksbasedonsymplecticintegrators,offen2022symplectic}. Using Taylor expansions to derive the exact form of the inverse modified vector field allows for the construction of a correction term that cancels the error stemming from the temporal discretization, up to arbitrary order \cite{offen2022symplectic,SymplecticLearningforHamiltonianNeuralNetworks}.

\section{Inverse ODE problems on Hamiltonian form}
  We consider a first-order ODE 
  \begin{equation}\label{eq:ode}
    \frac{d}{dt} y(t) = f(y(t)), \quad y(t) : [0,T] \rightarrow \R^n,
    \end{equation}
  and assume that the vector field $f$ is unknown, whereas samples $S_N = \{y(t_n)\}_{n=0}^N$ of the solution are available, with constant step size $h$. Then the inverse problem aims at deriving an approximation $f_{\theta} \approx f$ where $\theta$ is a set of parameters to be chosen. The inverse problem can be formulated as the following optimization problem:
\begin{equation}
  \begin{aligned}
  \operatorname*{arg\,min}_\theta  \sum_{n=0}^{N-1} \bigg \| y(t_{n+1}) - \Phi_{h,f_{\theta}}(y(t_n))   \bigg \|,
  \label{inverse_problem}
  \end{aligned}
\end{equation}
where $f_{\theta}$ is a neural network approximation of $f$ with parameters $\theta$, and $\Phi_{h,f_{\theta}}$ is a one-step integration method with step size $h$ such that  $y_{n+1} = \Phi_{h,f}(y_n)$.  In particular, we assume that \eqref{eq:ode} is a Hamiltonian system, meaning that 
\begin{equation}
  \label{hamiltonian system}
    f(y) = J\nabla H(y(t)), \quad J := \begin{bmatrix}
      0 & I\\
      -I & 0
    \end{bmatrix} \in \R^{2d \times 2d}.
  \end{equation}
We follow the idea of Hamiltonian neural networks \cite{HamiltonianNeuralNetworks} aiming at approximating the Hamiltonian, $H : \R^{2d} \rightarrow \R$, such that $H_{\theta}$ is a neural network and $f$ is approximated by  $f_{\theta}(y) := J\nabla H_{\theta}(y)$. It thus follows that the learned vector field 
$f_{\theta}$ by construction is Hamiltonian.

\section{Mono-implicit Runge--Kutta for inverse problems}
Since the solution is known point-wise, $S_N = \{ y(t_n)\}_{n=0}^N$, the points $y_n$ and $y_{n+1}$ can be substituted by $y(t_{n})$ and $y(t_{n+1})$ when computing the next step of a one-step integration method. We denote this substitution as the \textit{inverse injection}, and note that is yields an interpolation condition for $f_{\theta}\approx f$ for each $n$. If we let $\Phi_{h,f_{\theta}}$ in \eqref{inverse_problem} be the so-called implicit midpoint method, we get the following expression to be minimized:
\begin{equation}
  \begin{aligned}
 \bigg \| y(t_{n+1}) - \bigg ( y(t_n) + hf_{\theta}\big( \frac{y(t_n) +  y(t_{n+1})}{2} \big) \bigg )   \bigg \|, \quad n = 0\dots,N-1. 
  \label{midpoint_opt}
  \end{aligned}
\end{equation}
For the midpoint method, the inverse injection bypasses the computationally costly problem of solving a system of equations within each training iteration, since $y(t_{n+1})$ is known. More generally, mono-implicit Runge--Kutta (MIRK) methods constitute the class of all Runge--Kutta methods that form explicit methods under this substitution. Given vectors $b,v\in\R^s$ and a strictly lower triangular matrix $D \in \R^{s\times s}$, a MIRK method is a Runge--Kutta method where $A = D + vb^T$, and is thus given by
\begin{equation}
\begin{aligned}
    \label{mirk-stages}
        y_{n+1} &= y_n + h\sum_{i=1}^s b_i k_i,   \\
    k_i &= f\big (y_n + v_i(y_{n+1} - y_n) + h\sum_{j=1}^s d_{ij} k_j \big).
  \end{aligned} 
\end{equation} 
Let us denote $\hat y_{n+1}$ and $\hat k_i$ as the next time-step and the corresponding stages of a MIRK method when substituting $y_n,y_{n+1}$ by $y(t_n),y(t_{n+1})$ on the right-hand side of \eqref{mirk-stages}.
\begin{theorem}
    Let $y_{n+1}$ be given by a MIRK scheme \eqref{mirk-stages} of order $p$ and $\hat y_{n+1}$ be given by the same method under the inverse injection. Assume that only one integration step is taken from a known initial value $y_n = y(t_n)$. Then 
    \begin{align}
        \hat y_{n+1}  &= y_{n+1} +  \mathcal O(h^{p+2})\\
       \text{and}\quad  \hat y_{n+1}  &=  y(t_{n+1}) +  \mathcal O(h^{p+1}).
    \end{align}
\end{theorem}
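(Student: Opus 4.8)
The plan is to reduce everything to the local (one-step) error of the underlying MIRK scheme and then track how the inverse injection perturbs the internal stages. Since the theorem assumes a single step from the exact value $y_n = y(t_n)$, the standard order-$p$ property of the Runge--Kutta method supplies the local error bound $y_{n+1} - y(t_{n+1}) = \mathcal{O}(h^{p+1})$ for free, and I would take this as the only external fact needed. The entire content is then the first estimate $\hat{y}_{n+1} = y_{n+1} + \mathcal{O}(h^{p+2})$, from which the second follows immediately by the triangle inequality, $\hat{y}_{n+1} - y(t_{n+1}) = (\hat{y}_{n+1} - y_{n+1}) + (y_{n+1} - y(t_{n+1})) = \mathcal{O}(h^{p+2}) + \mathcal{O}(h^{p+1}) = \mathcal{O}(h^{p+1})$.

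To prove the first estimate, I would compare the two schemes stage by stage. Writing $Y_i$ and $\hat{Y}_i$ for the arguments of $f$ in \eqref{mirk-stages} and in its inverse-injected counterpart, and using $y_n = y(t_n)$, the crucial observation is that the only discrepancy entering the stage arguments is the local error: $Y_i - \hat{Y}_i = v_i\,(y_{n+1} - y(t_{n+1})) + h\sum_{j} d_{ij}(k_j - \hat{k}_j)$. Setting $\Delta y = y_{n+1} - y(t_{n+1})$ and $\Delta k_i = k_i - \hat{k}_i$, smoothness of $f$ (Lipschitz on a compact neighbourhood of the trajectory for $h$ small) gives $\|\Delta k_i\| \le L\,\|Y_i - \hat{Y}_i\|$.

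The key structural step is to exploit that $D$ is strictly lower triangular, so that $\Delta k_i$ depends only on $\Delta k_j$ with $j < i$; this lets me bound the stage differences by induction on $i$. For the base case the first row of $D$ vanishes, so $Y_1 - \hat{Y}_1 = v_1\Delta y$ and hence $\Delta k_1 = \mathcal{O}(h^{p+1})$. Assuming $\Delta k_j = \mathcal{O}(h^{p+1})$ for all $j < i$, the term $h\sum_{j<i} d_{ij}\Delta k_j = \mathcal{O}(h^{p+2})$ is subdominant, so $Y_i - \hat{Y}_i = v_i\Delta y + \mathcal{O}(h^{p+2}) = \mathcal{O}(h^{p+1})$ and therefore $\Delta k_i = \mathcal{O}(h^{p+1})$. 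Thus every stage difference is $\mathcal{O}(h^{p+1})$.

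Finally, subtracting the two update formulas and again using $y_n = y(t_n)$ gives $\hat{y}_{n+1} - y_{n+1} = h\sum_i b_i(\hat{k}_i - k_i) = -h\sum_i b_i\Delta k_i$, and the extra factor $h$ promotes the $\mathcal{O}(h^{p+1})$ stage error to $\mathcal{O}(h^{p+2})$, completing the proof. I expect the only genuinely delicate point to be the well-posedness and uniform boundedness needed to justify the Lipschitz estimate: the implicit stages of the standard scheme must be shown to exist and to remain in a fixed compact set as $h \to 0$ (via the implicit function theorem), so that a single constant $L$ controls all the $\|\Delta k_i\|$ simultaneously. Everything else is bookkeeping on powers of $h$.
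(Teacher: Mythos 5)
Your proof is correct and follows essentially the same route as the paper: a stage-by-stage comparison using the order-$p$ local error $y_{n+1}-y(t_{n+1})=\mathcal O(h^{p+1})$, the strict lower triangularity of $D$ to propagate $\Delta k_i=\mathcal O(h^{p+1})$ through the stages, and the extra factor of $h$ in the update to get $\mathcal O(h^{p+2})$. Your version is in fact more carefully executed than the paper's, which asserts the stage estimates without spelling out the induction or the Lipschitz bound that you make explicit.
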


\begin{proof}
    Since the method \eqref{mirk-stages} is of order $p$ we have that 
    \begin{align*}
        \hat k_1 &= f\big (y(t_{n}) + v_1(y(t_{n+1}) - y(t_{n})) \big)\\
        &= f\big (y_{n} + v_1(y_{n+1} - y_{n})\big) + \mathcal O(h^{p+1}) 
        = k_1 + \mathcal O(h^{p+1})
      \end{align*} 
The same approximation could be made for $\hat k_2,\dots,\hat k_s$, since $D$ is strictly lower triangular, yielding $\hat k_i = k_i + \mathcal O(h^{p+1})$ for $i = 1,\dots,s$. In total, we find that
\begin{align*}
    \hat y_{n+1} &= y(t_n) + h\sum_{i=1}^s b_i \hat k_i   \\
    &= y_n + h\sum_{i=1}^s b_i k_i + \mathcal O(h^{p+2}) \\
    &= y_{n+1} + \mathcal O(h^{p+2})\\
    &= y(t_{n+1}) + \mathcal O(h^{p+1}) + \mathcal O(h^{p+2}).\\
    &= y(t_{n+1}) + \mathcal O(h^{p+1}).
\end{align*}
\end{proof}

For the numerical experiments, we will consider the optimal  MIRK methods derived in \cite{muir_optimal_nodate}. The minimal number of stages required to obtain order $p$ is $s = p-1$ for MIRK methods  \cite{burrage1994order}. In contrast, explicit Runge--Kutta methods need $s=p$ stages to obtain order $p$ for $1\leq p\leq 4$ and $s = p+1$ stages for $p = 5,6$ \cite{butcher2016numerical}, meaning that the MIRK methods have significantly lower computational cost for a given order. As an example, a symmetric, A-stable MIRK method with $s=3$ stages and of order $p = 4$ is given by 
\begin{align*}
  &k_1 = f(y_n), \quad
  k_2 = f(y_{n+1}), \\
  &k_3 = f\bigg(\frac{1}{2}(y_n + y_{n+1}) + \frac{h}{8}(k_1 - k_2) \bigg),\\
  &y_{n+1} = y_n + \frac{h}{6}(k_1 + k_2 + 4k_3).
\end{align*}

\section{Backward error analysis}
Let $\varphi_{h,f}:\mathbb{R}^n\rightarrow \mathbb{R}^n$ be the $h$-flow of an ODE such that $\varphi_{h,f}(y(t_0)) := y(t_{0} + h)$ for an initial value $y(t_0)$. With this notation, the vector field $f_h(y)$ solving the optimization problem \eqref{inverse_problem} exactly must satisfy
\begin{equation}
    \varphi_{h,f}(y(t_n)) = \Phi_{h,f_h}(y(t_n)),\quad n=0,\dots,N-1.
    \label{imde_def}
\end{equation}
For a given numerical one-step method $\Phi$, the \textit{inverse modified vector field} \cite{zhu2022numerical} $f_h$ could be computed by Taylor expansions. However, since their convergence is not guaranteed, truncated approximations are usually considered. This idea builds on backward error analysis \cite[Ch.\ IX]{hairer2006geometric}, which is used in the case of forward problems ($f$ is known and $y(t)$ is approximated) and instead computes the modified vector field $\tilde f_h$ satisfying $\varphi_{h,\tilde f_h}(y(t_n)) = \Phi_{h,f}(y(t_n))$.

An important result, Theorem 3.2 in \cite{zhu2022numerical}, which is very similar to Theorem 1.2 in \cite[Ch.\ IX]{hairer2006geometric}, states that if the method $\Phi_{h,f}$ is of order $p$, then the inverse modified vector field is a truncation of the true vector field, given by
\begin{equation}
  f_h(y) = f(y) + h^pf_p(y) + \dots = f(y) + \mathcal O(h^p).
  \label{imde_order}
\end{equation}

Furthermore, by the triangle inequality, we can express the objective function of the optimization problem \eqref{inverse_problem} in a given point $y(t_{n})$ by
\begin{equation*}
\begin{split}
 \big \| y(t_{n+1}) - \Phi_{h,f_{\theta}}(y(t_n))   \big \| \leq & \,  \big \| \varphi_{h,f}(y(t_n)) - \Phi_{h,f_h}(y(t_n))  \big \| \\& \,  + \big \| \Phi_{h,f_h}(y(t_n))  - \Phi_{h,f_{\theta}}(y(t_n))  \big \|
 \end{split}
\end{equation*}
In the case of formal analysis where we do not consider convergence issues and truncated approximations, the first term is zero by the definition of $f_h$ in \eqref{imde_def}. Thus it is evident that the approximated vector field will approach the inverse modified vector field as the optimization objective tends to zero. Then, by Equation \eqref{imde_order} it is clear that $f_{\theta}(y)$ will learn an approximation of $f(y)$ up to a truncation $\mathcal O(h^p)$, which motivates using an integrator of high order.

 \section{Numerical experiments}
In this section, MIRK methods of order $2 \leq p\leq 6$, denoted by MIRK$p$ in the plots, in addition to the classic fourth-order Runge--Kutta method (RK$4$), is utilized for the temporal discretization in the training of Hamiltonian neural networks. We train on samples $y(t_n)$, for $t_n \in [0,20]$, from solutions of the double pendulum (DP) problem with the Hamiltonian
    \begin{equation*}
      H(y_1,y_2,y_3,y_4) = \frac{\frac{1}{2}y_3^2 + y_4^2 - y_3y_4\cos(y_1-y_2)}{1+\sin^2(y_1-y_2)} - 2\cos(y_1) -\cos(y_2).
    \end{equation*}
In addition, we consider the highly oscillatory Fermi--Pasta--Ulam--Tsingou (FPUT) problem with $m=1$, meaning $y(t)\in\R^4$, and $\omega = 2$ as formulated in \cite[Ch.\ I.5]{hairer2006geometric}. For both Hamiltonian systems, the data $S_{N}=\{ y(t_{i}) \}_{i=0}^{N}$ is found by integrating the system using \textit{DOP853} \cite{DORMAND198019} with a tolerance of $10^{-15}$ for the following step sizes and number of steps: $(h,N) = (2,10),(1,20),(0.5,40)$. The initial values used are $y^{\text{DP}}_0 =[-0.1,0.5,-0.3,0.1]^T$ and $y^{\text{FPUT}}_0 = [0.2,0.4,-0.3,0.5]^T$ . The results for $[y(t)]_3$ are illustrated in Figure \ref{rollout_time}. 

  \begin{figure}
    \centering
 \hspace{50pt}   \text{Double pendulum} \hspace{70pt} \text{Fermi--Pasta--Ulam--Tsingou} \vspace{-5pt}
    \includegraphics[trim=20 34 70 0,clip,width=0.485\textwidth]{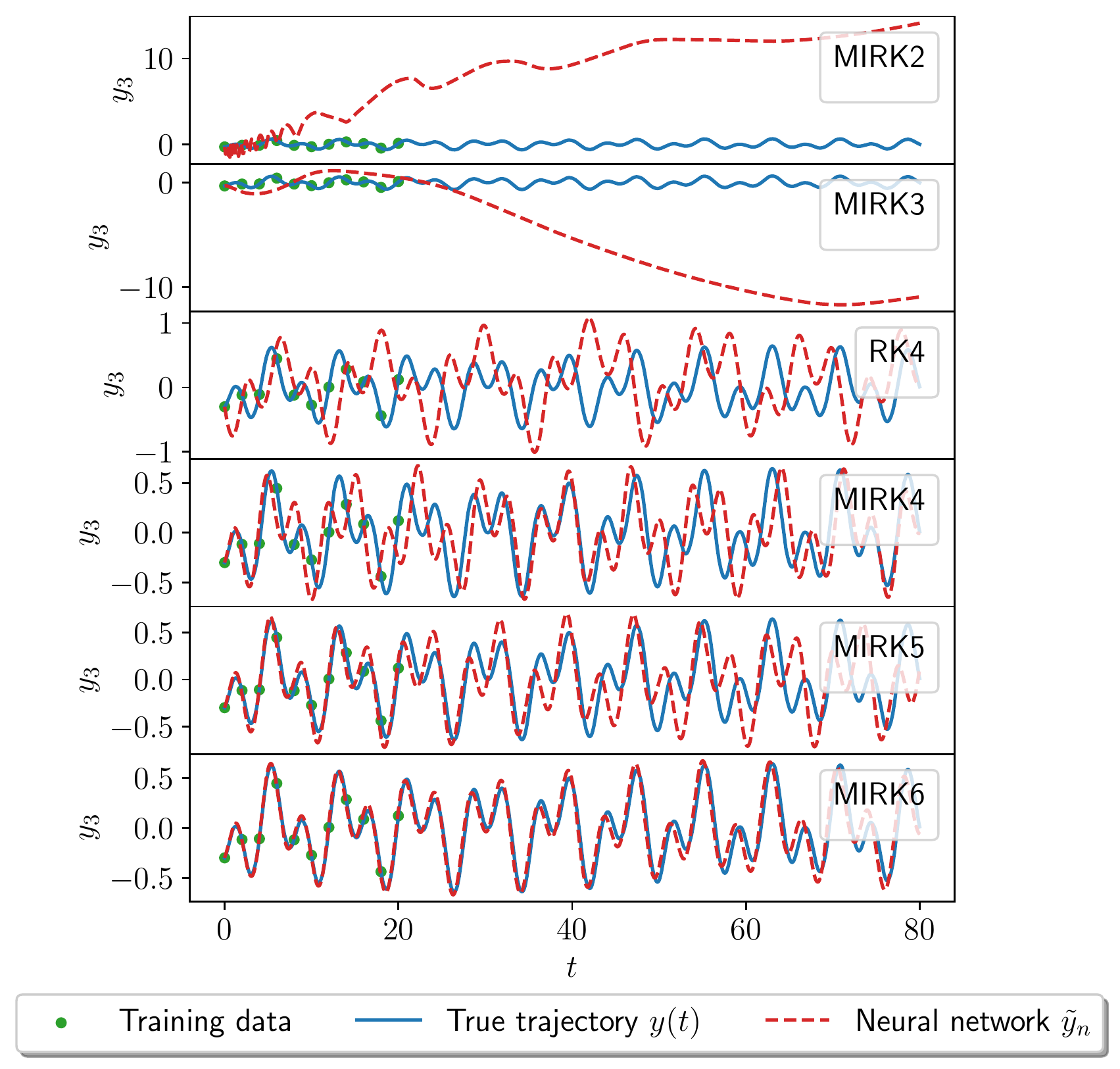}
    \includegraphics[trim=20 34 70 0,clip,width=0.485\textwidth]{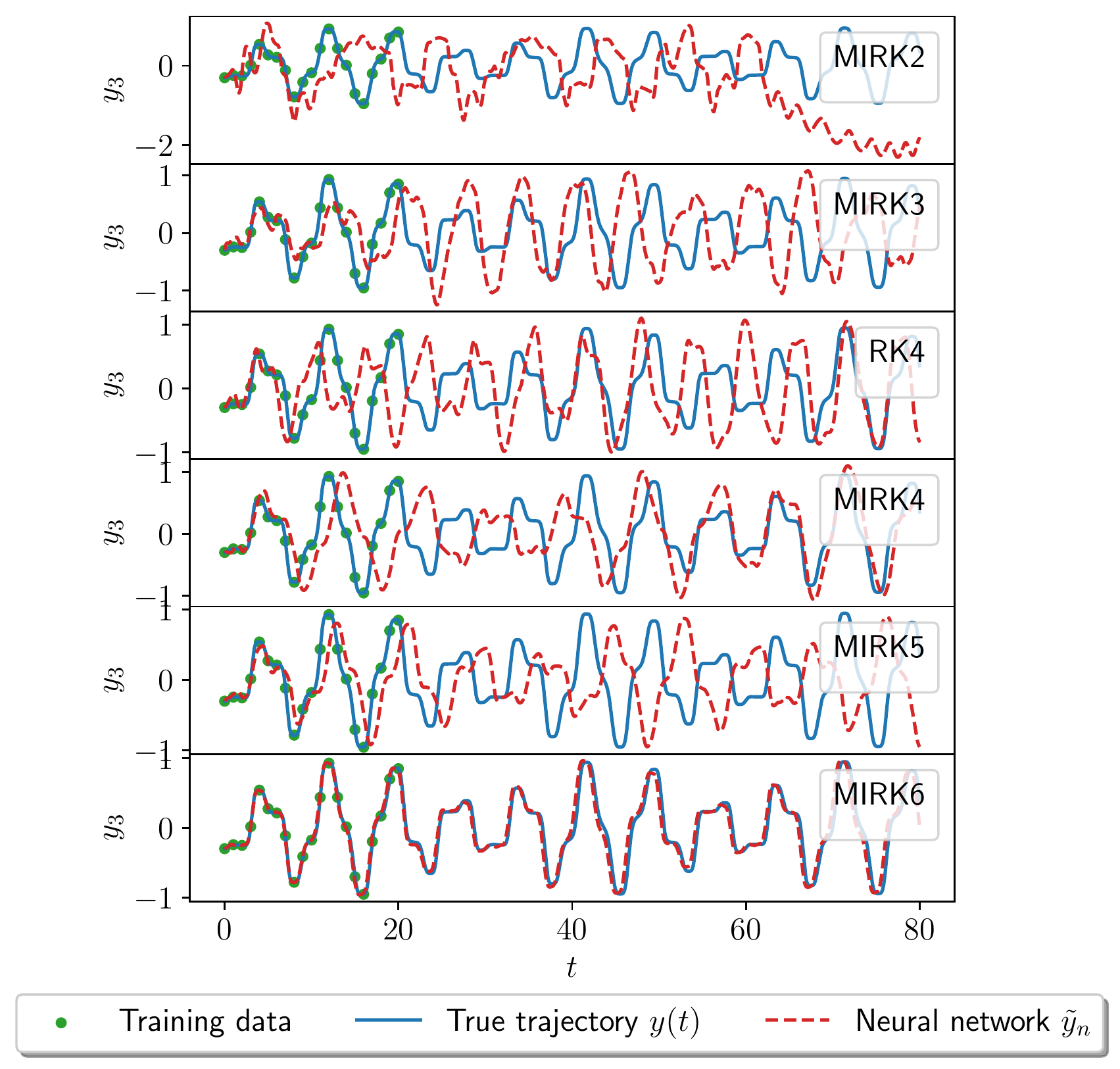}\vspace{2pt}\hspace{13pt}
\includegraphics[trim=5 0 0 430,clip,width=0.6\textwidth]{result_time_multiple_fput.pdf}
\caption{Result when integrating over the learned vector fields when training on data from the double pendulum (left, $N = 10$) and the Fermi--Pasta--Ulam--Tsingou (right, $N = 20$) Hamiltonian.}
\label{rollout_time}
    \end{figure}

 After using the specified integrators in training, approximated solutions $\tilde y_n$ are computed for each learned vector field $f_{\theta}$ again using DOP853, but now with step size and number of steps given by $(h_{\text{test}},N_{\text{test}}) = (\frac{h}{20},4\cdot 20N)$, enabling the computation of the interpolation and extrapolation error:
  \begin{equation}
  \label{flow_error}
    e^{l}(\tilde y) = \frac{1}{M+1} \sum_{n=0}^M  \|\tilde y_n - y(t_n) \|_2, \quad t_n \in Q^l, \quad M = |Q^l|-1.
  \end{equation}
Here $\tilde y_{n+1} := \Phi_{h,f_{\theta}}(\tilde y_n)$,  and $l \in \{i,e\}$ denotes interpolation or extrapolation:  $Q^i = \{hn : \; 0 \leq hn \leq 20, n \in \mathbb Z_+  \}$ and\ $Q^e = \{hn : \; 20 \leq hn \leq 80, n \in \mathbb Z_+  \}$, with $h = h_{\text{test}}$. In addition, the error of the learned Hamiltonian is computed along the true trajectory $y(t_n)$ by
  \begin{equation}
  \label{flow_error}
  \begin{split}
    \overline{ e(H_{\theta}) } &= \frac{1}{M+1} \sum_{n=0}^M  H(y(t_n)) - H_{\theta}(y(t_n)),  \\
e(H_{\theta}) &= \frac{1}{M+1} \sum_{n=0}^M  \bigg | H(y(t_n)) - H_{\theta}(y(t_n)) - \overline{ e(H_{\theta}) } \bigg|,
    \end{split}
  \end{equation}
for $t_n \in Q_i \cup Q_e$. The mean is subtracted since the Hamiltonian is only trained by its gradient $\nabla H_{\theta}$. The error terms are shown in Figure \ref{error}.

  \begin{figure}
    \centering
 \hspace{35pt}   \text{Double pendulum} \hspace{60pt} \text{Fermi--Pasta--Ulam--Tsingou}  \vspace{-5pt}
    \includegraphics[trim=7 55 0 0,clip,width=0.45\textwidth]{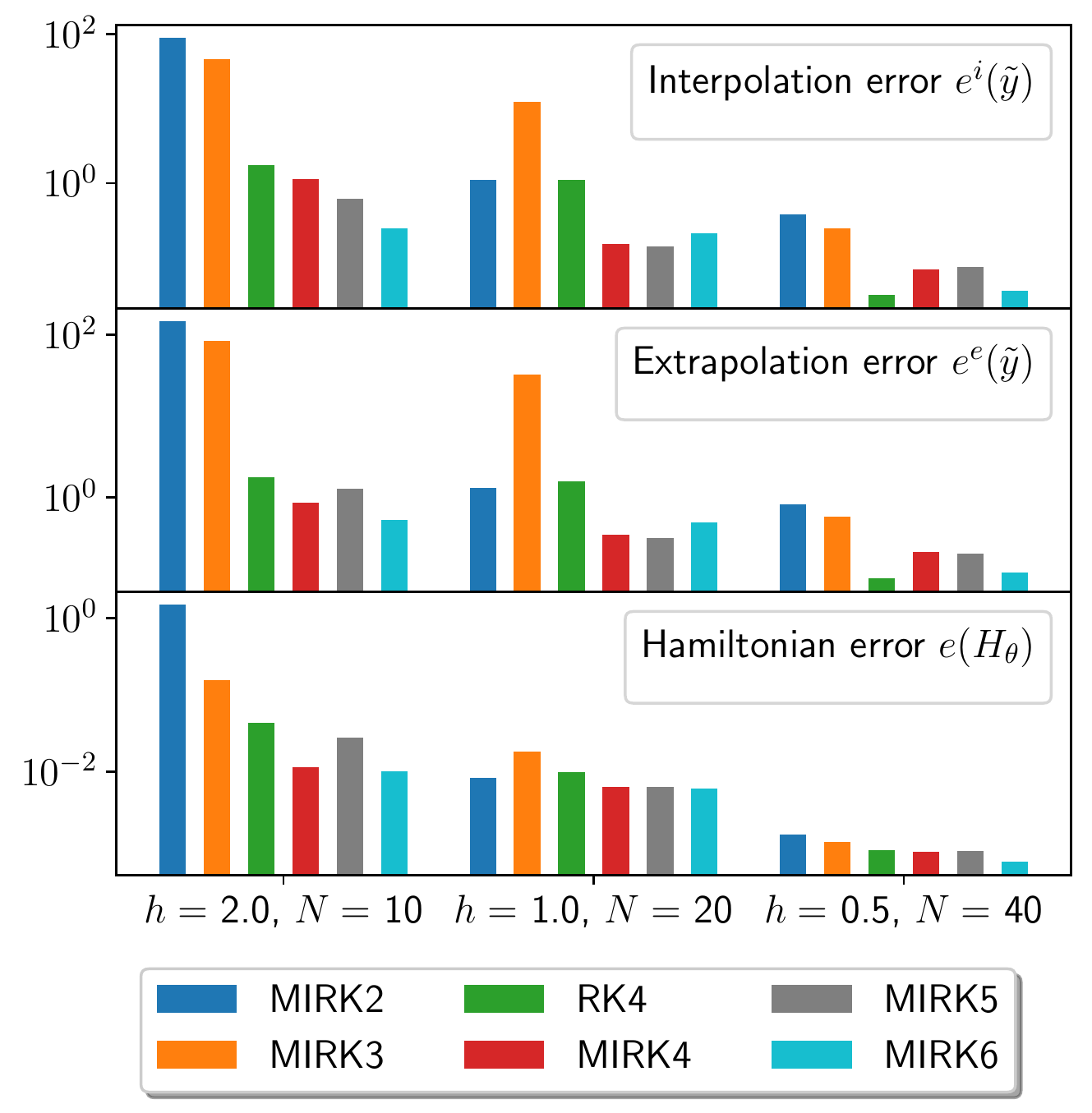}
    \includegraphics[trim=7 55 0 0,clip,width=0.45\textwidth]{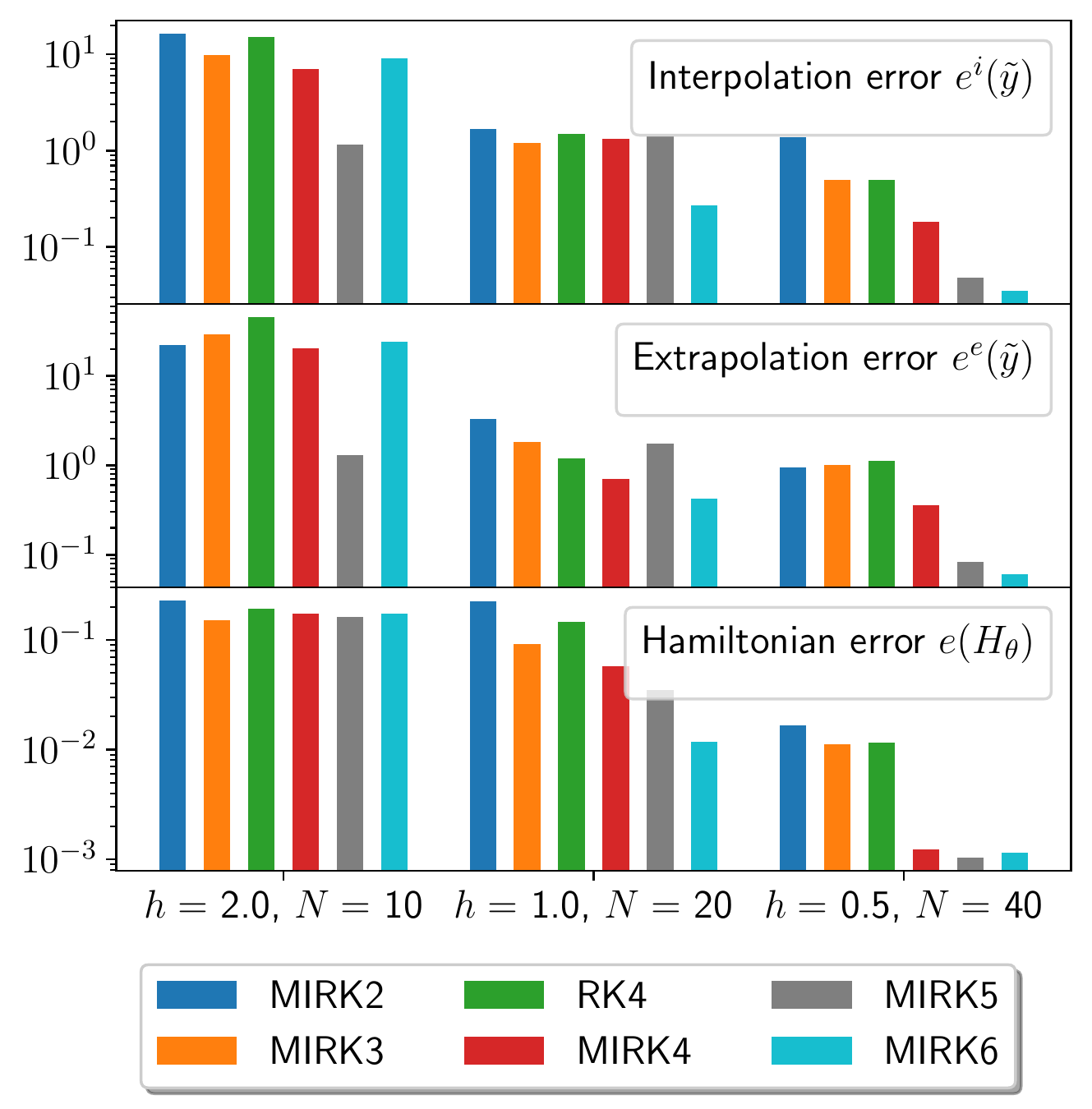}\vspace{6pt}
    \includegraphics[trim=40 0 25 335,clip,width=0.40\textwidth]{error_fput.pdf}
    \caption{Errors in interpolation, extrapolation and the Hamiltonian for the double pendulum (left) and the Fermi--Pasta--Ulam--Tsingou problem (right).}
    \label{error}
    \end{figure}

For both test problems the Hamiltonian neural networks have $3$ layers with a width of $100$ neurons and $\text{tanh}( \cdot )$ as the activation function. Experiments are implemented using PyTorch \cite{pytorch} and the optimization problem is solved using the quasi-Newton L-BFGS algorithm \cite{nocedal1999numerical} for $100$ epochs without batching. The implementation of the experiments could be found in the following repository \href{https://github.com/hakonnoren/learning_hamiltonian_mirk}{github.com/hakonnoren/learning{\_}hamiltonian{\_}mirk}.

\section{Conclusion}
The mono-implicit Runge--Kutta methods enable the combination of high order and computationally efficient training of Hamiltonian neural networks. The importance of high order is demonstrated by the remarkable capacity of MIRK$6$ in learning a trajectory of the chaotic double pendulum and the Fermi--Pasta--Ulam--Tsingou Hamiltonian systems from just $11$ and $21$ points, see Figure \ref{rollout_time}. In most cases the error, displayed in Figure \ref{error}, is decreasing when increasing the order. Additionally MIRK$4$ displays superior performance comparing with the explicit method RK$4$ of same order. Even though the numerical experiments show promising results, the theoretical error analysis in this work is rudimentary at best. Future work should consider this is greater detail, perhaps along the lines of \cite{zhu2022numerical}.

\section*{Acknowledgments}
The author wishes to express gratitude to Elena Celledoni and Sølve Eidnes for constructive discussions and helpful suggestions while
working on this paper.

%
%
%
 \bibliographystyle{splncs04}
 \bibliography{ref}

\begin{thebibliography}{10}
\providecommand{\url}[1]{\texttt{#1}}
\providecommand{\urlprefix}{URL }
\providecommand{\doi}[1]{https://doi.org/#1}

\bibitem{burrage1994order}
Burrage, K., Chipman, F., Muir, P.H.: Order results for mono-implicit
  {R}unge--{K}utta methods. SIAM journal on numerical analysis  \textbf{31}(3),
   876--891 (1994)

\bibitem{butcher2016numerical}
Butcher, J.C.: Numerical methods for ordinary differential equations. John
  Wiley \& Sons (2016)

\bibitem{cash1975class}
Cash, J.R.: A class of implicit {Runge--Kutta} methods for the numerical
  integration of stiff ordinary differential equations. Journal of the ACM
  (JACM)  \textbf{22}(4),  504--511 (1975)

\bibitem{chen2018neural}
Chen, R.T., Rubanova, Y., Bettencourt, J., Duvenaud, D.K.: Neural ordinary
  differential equations. Advances in neural information processing systems
  \textbf{31} (2018)

\bibitem{Chen2020Symplectic}
Chen, Z., Zhang, J., Arjovsky, M., Bottou, L.: Symplectic recurrent neural
  networks. In: International Conference on Learning Representations (2020),
  \url{https://openreview.net/forum?id=BkgYPREtPr}

\bibitem{SymplecticLearningforHamiltonianNeuralNetworks}
David, M., M{\'e}hats, F.: Symplectic learning for {H}amiltonian neural
  networks. arXiv preprint arXiv:2106.11753  (2021)

\bibitem{DORMAND198019}
Dormand, J., Prince, P.: A family of embedded {R}unge--{K}utta formulae.
  Journal of Computational and Applied Mathematics  \textbf{6}(1),  19--26
  (1980). \doi{https://doi.org/10.1016/0771-050X(80)90013-3},
  \url{https://www.sciencedirect.com/science/article/pii/0771050X80900133}

\bibitem{machine_dynamical}
E, W.: A proposal on machine learning via dynamical systems. Communications in
  Mathematics and Statistics  \textbf{5}(1),  1--11 (2017).
  \doi{10.1007/s40304-017-0103-z},
  \url{https://doi.org/10.1007/s40304-017-0103-z}

\bibitem{HamiltonianNeuralNetworks}
Greydanus, S., Dzamba, M., Yosinski, J.: {H}amiltonian neural networks. CoRR
  \textbf{abs/1906.01563} (2019), \url{http://arxiv.org/abs/1906.01563}

\bibitem{haber2017stable}
Haber, E., Ruthotto, L.: Stable architectures for deep neural networks. Inverse
  problems  \textbf{34}(1),  014004 (2017)

\bibitem{hairer2006geometric}
Hairer, E., Lubich, C., Wanner, G.: {Geometric {N}umerical {I}ntegration:
  {S}tructure-{P}reserving {A}lgorithms for {O}rdinary {D}ifferential
  {E}quations; 2nd ed.} Springer, Dordrecht (2006). \doi{10.1007/3-540-30666-8}

\bibitem{muir_optimal_nodate}
Muir, P.H.: Optimal discrete and continuous mono-implicit {R}unge-{K}utta
  schemes for {BVODE}s. Adv. Comput. Math.  \textbf{10}(2),  135--167 (1999).
  \doi{10.1023/A:1018926631734}, \url{https://doi.org/10.1023/A:1018926631734}

\bibitem{nocedal1999numerical}
Nocedal, J., Wright, S.J.: Numerical optimization. Springer (1999)

\bibitem{offen2022symplectic}
Offen, C., Ober-Bl{\"o}baum, S.: Symplectic integration of learned
  {H}amiltonian systems. Chaos: An Interdisciplinary Journal of Nonlinear
  Science  \textbf{32}(1),  013122 (2022)

\bibitem{pytorch}
Paszke, A., Gross, S., Massa, F., Lerer, A., Bradbury, J., Chanan, G., Killeen,
  T., Lin, Z., Gimelshein, N., Antiga, L., et~al.: Py{T}orch: {A}n imperative
  style, high-performance deep learning library. Advances in neural information
  processing systems  \textbf{32},  8026--8037 (2019)

\bibitem{ruthotto2020deep}
Ruthotto, L., Haber, E.: Deep neural networks motivated by partial differential
  equations. Journal of Mathematical Imaging and Vision  \textbf{62}(3),
  352--364 (2020)

\bibitem{DeepHamiltoniannetworksbasedonsymplecticintegrators}
Zhu, A., Jin, P., Tang, Y.: Deep {H}amiltonian networks based on symplectic
  integrators. arXiv preprint arXiv:2004.13830  (2020)

\bibitem{zhu2022numerical}
Zhu, A., Jin, P., Zhu, B., Tang, Y.: On numerical integration in neural
  ordinary differential equations. In: International Conference on Machine
  Learning. pp. 27527--27547. PMLR (2022)

\end{thebibliography}

\end{document}